\newtheorem{thm}{Theorem}[section]
 \newtheorem{cor}[thm]{Corollary}
 \newtheorem{lem}[thm]{Lemma}
 \newtheorem{prop}[thm]{Proposition}
 \theoremstyle{definition}
 \newtheorem{defn}[thm]{Definition}
 \newtheorem{notation}[thm]{Notation}
 \newtheorem*{df*}{Definition}
 \newtheorem{algo}[thm]{Algorithm}
 \theoremstyle{remark}
 \newtheorem{rem}[thm]{Remark}
 \newtheorem*{ex}{Example}
 \numberwithin{equation}{section}
 \newcommand{\K}{\mathbb{K}}
\newcommand{\N}{\mathbb{N}}
\newcommand{\Hom}{\operatorname{Hom}}
\newcommand{\cC}{\mathcal{C}}
\newcommand{\supp}{\operatorname{supp}}
\newcommand{\FP}{\operatorname{FP}}
\def\norm#1{\lvert\lvert #1\rvert\rvert}
\title[An algorithm for producing F-pure ideals]{An algorithm for producing F-pure ideals}
\author[A.\,F.\,Boix]{Alberto F.\,Boix$^{*}$}
\thanks{$^{*}$Partially supported by MTM2010-20279-C02-01.}
\address{Department of Economics and Business, Universitat Pompeu Fabra, Jaume I Building, Ramon Trias Fargas 25-27, 08005 Barcelona, Spain.}
\email{alberto.fernandezb@upf.edu}
\urladdr{http://atlas.mat.ub.edu/personals/aboix/}
\author[M.\,Katzman]{Mordechai Katzman$^{**}$}
\thanks{$^{**}$Supported by EPSRC grant EP/I031405/1.}
\address{Department of Pure Mathematics, University of Sheffield, Hicks Building, Sheffield S3 7RH, United Kingdom}
\email{M.Katzman@sheffield.ac.uk}
\urladdr{http://www.katzman.staff.shef.ac.uk/}
\keywords{Algorithm, Frobenius map, Test ideal, Prime characteristic.}
\subjclass[2010]{Primary 13A35, 14B05}
\begin{document}

\begin{abstract}
This paper describes a method for computing all $F$-pure ideals for a given Cartier map of a polynomial ring over a finite field.
\end{abstract}

\maketitle

\section*{Introduction}

The subject of this paper is the study of certain ideals associated with a given $p^{-e}$-linear map. These maps were introduced by K.\,Schwede in \cite{Schwede2011} and M.\,Blickle in \cite{Blickle2013} in the context of test ideals and are defined as follows.

Throughout this manuscript, unless otherwise is specified, we shall denote by $A$ a fixed regular ring containing $\K$, where $\K$ is an \emph{$F$-finite field} of prime characteristic $p$ (i.\,e. a field $\K$ which is a finite extension of $\K^p$). The \emph{Frobenius map}, raising an element $a\in A$ to its $p$th power $a^p$,
is an additive map.


Given any $A$-module $M$ and $e\in\N$, $F_*^e M$ will denote the abelian group $M$ with $A$-module structure given by $a \cdot m = a^{p^e} m$ for any $a\in A$ and $m\in F_*^e M$. Given an $m\in M$ we shall henceforth write $F_*^e m$ for the same element regarded as a member of $F_*^e M$.

The $p^{-e}$-linear maps referred to above are elements in $\Hom_A (F_*^e M, M)$; these can be thought as additive maps $\xymatrix@1{M\ar[r]^-{\phi_e}& M}$ for which, for any
$a\in A$ and $m\in M$, $\phi_e (a^{p^e}m)=a\phi_e (m)$. We further define
\[
\cC^M :=\bigoplus_{e\geq 0} \Hom_A (F_*^e M, M)
\]
and endow it with the structure of an $A$-algebra by defining the product of $\phi_e\in\cC_e^M$ and $\phi_{e'}\in\cC_{e'}^M$
as the element of $\cC_{e+e'}^M$ given by
\[
\xymatrix{F_*^{e+e'} M\ar[rr]^-{\phi_{e'}\circ F_*^{e'} \phi_e}& & M.}
\]
Moreover, we also set
\[
\cC_+^M :=\bigoplus_{e\geq 1}\Hom_A (F_*^e M, M).
\]
In this article we shall be interested mostly in $\cC^A$.

\begin{df*} 
A $\cC^M$-submodule $N$ of $M$ is \emph{$F$-pure} if $\cC_+^M N=N$. In particular, we say that an ideal $I$ of $A$ is \emph{$F$-pure} provided $\cC_+^A I=I$.
\end{df*}

Our paper is motivated by the study of $F$-pure ideals and their properties introduced in \cite{Blickle2013}.
When $A$ is $F$-finite, $A$ itself is an $F$-pure ideal if and only if $\cC_+^A$ contains a splitting of a certain power of the Frobenius map on $A$ (cf.\,\cite[Proposition 3.5]{Blickle2013}); therefore, these $F$-pure ideals turn out to be a generalization of
the $F$-purity property.

Furthermore, among the main results in \cite{Blickle2013} (see also \cite[Corollary 4.20 and Proposition 5.4]{BlickleBockle2011}) is the fact that the set of $F$-pure ideals in $A$ is finite, and that the big test ideal is the minimal element of the set of $F$-pure ideals.
Apart from their usefulness in describing big test ideals, we believe that the set of  $F$-pure ideals provides an interesting set of invariants of $A$ providing information about the ring which is not yet fully understood.

One should contrast this with the situation one encounters when studying the set of \emph{$\cC_+^A$-compatible ideals},
i.e., ideals $I\subseteq A$ for which $\cC_+^A I \subseteq I$.
One might hope to list all $F$-pure ideals by listing all compatible ideals and checking which ones are $F$-pure.
However, the set of compatible ideals need not be finite, and one can only describe algorithmically the radical ideals among these; this task was carried out in \cite{KatzmanSchwede2012}.

Our contribution to the understanding of $F$-pure ideals is to provide an effective procedure to calculate all the $F$-pure ideals of $A=\K [x_1,\ldots ,x_d]$ contained in the maximal ideal $\mathfrak{m}=\langle x_1,\ldots ,x_d\rangle$ of the subalgebra $\cC=\cC^{\phi}$ of $\cC^A$ generated by one homogeneous element $\phi\in\Hom_A (F_*^e A,A)$ under the additional assumption that the ground field $\K$ is \emph{finite}. This procedure has been implemented in Macaulay2 (cf.\,\cite{BoixKatzmanM2}).

This paper is organized as follows.
Firstly, in Section \ref{theoretical section} we introduce compatible and fixed ideals; moreover, we show that the so-called \emph{$e$th root ideal} (cf.\,Definition \ref{the root ideal}) plays a key role in their calculation (cf.\,Theorem \ref{compatibles y fijos: como calcularlos}). Secondly, Section \ref{the section of the algorithm} contains the main result of this paper; namely, the algorithm referred to above (cf.\,Theorem \ref{Moty nos da un algoritmo}). 
This introduces a new operation on ideals (cf.\,Definition \ref{la construccion hash}), hoping that it may be interesting in its own right. Finally, in Section \ref{examples section} we provide examples in order to illustrate how our method works; most of these specific computations were carried out with an implementation of this procedure in Macaulay2.

\section{Ideals compatible and fixed under a given $p^{-e}$-linear map}\label{theoretical section}

Unless otherwise is specified, $\K$ denotes an \emph{$F$-finite field} of prime characteristic $p$, i.e.,
a field $\K$ which is a finite extension of $\K^p$. Given an $\mathbf{\alpha}=(a_1,\ldots ,a_d)\in\N^d$ we shall use the following multi-index notation:
\[
\mathbf{x}^{\mathbf{\alpha}}:=x_1^{a_1}\cdots x_d^{a_d}.
\]
Moreover, in this case, we set $\norm{\mathbf{x}^{\mathbf{\alpha}}}:=\max\{a_1,\ldots ,a_d\}$ and, for any polynomial $g\in\K [x_1,\ldots ,x_d]$,
\[
\norm{g}:=\max_{\mathbf{\alpha}\in\supp (g)} \norm{\mathbf{x}^{\mathbf{\alpha}}},
\]
where $g=\sum_{\mathbf{\alpha}\in\N^d} g_{\mathbf{\alpha}} \mathbf{x}^{\mathbf{\alpha}}$ (such that $g_{\mathbf{\alpha}}=0$ up to a finite number of terms) and
\[
\supp (g):=\left\{\mathbf{\alpha}\in\N^d\mid\quad g_{\mathbf{\alpha}}\neq 0\right\}.
\]
Given any ideal $I$, $I^{[p^e]}$ will denote the ideal generated by all the $p^e$ powers of elements in $I$.
It is straightforward to verify that $I^{[p^e]}$ is generated by the $p^e$ powers of a set of generators of $I$. Finally, given another ideal $J$ of $A$,
\[
(I:_A J):=\{a\in A\mid\ aJ\subseteq I\}
\]
will denote the corresponding colon ideal; in case $J$ is generated by a single element (namely, $u$), we shall simply write $(I:_A u)$.

The $F$-finiteness of $\K$ implies that $F_*^e A$ is a free $A$-module of finite rank.
Indeed, if $\mathcal{B}_e$ is a $\mathbb{K}^{p^e}$-basis for $\mathbb{K}$, then $F_*^e A$ has free basis
\[
\{b\mathbf{x}^{\mathbf{\alpha}}\mid\quad b\in\mathcal{B}_e,\quad 0\leq\norm{\mathbf{\alpha}}\leq p^e-1\}.
\]
Moreover, we recall that the \emph{trace map} $\Phi_e\in\Hom_A (F_*^e A,A)$, which is the projection onto the direct summand
$A x_1^{p^e-1} \cdots  x_d^{p^e-1}$,
generates the $F_*^e A$-module $\Hom_A (F_*^e A,A)$
(cf.\,\cite[Example 1.3.1]{BrionKumar2005}).
In this way, any homogeneous element $\phi\in\Hom_A (F_*^e A,A)$ can be written as $u\Phi_e$
(interpreted as the composition of multiplication by $u$ followed by $\Phi_e$) for some $u\in F_*^e A$.
Now if $\cC$ is the Cartier subalgebra of $\cC^A$ generated by such a $\phi$ then the problem of finding the $F$-pure ideals of $A$ amounts to finding all ideals $I\subseteq A$ such that $\phi\left(F_*^e I\right)=I$.

\begin{defn}
Let $I$ be an ideal of $A$ and let $\phi\in\Hom_A (F_*^e A,A)$.
\begin{enumerate}[(i)]

\item We say that $I$ is \emph{$\phi$-compatible} if $\phi (F_*^e I)\subseteq I$.

\item We say that $I$ is \emph{$\phi$-fixed} if $\phi (F_*^e I)= I$.

\end{enumerate}
\end{defn}

Clearly, all $\phi$-fixed ideals are $\phi$-compatible. The converse also holds if $\phi$ is a \emph{Frobenius splitting}, i.e., if $\phi(F_*^e 1)=1$: in this case, for any $r$ in a $\phi$-compatible ideal  $I$ we have $\phi(F_*^e r^{p^e})=r \phi( F_*^e 1)= r$.

From now on, we shall write  our given $\phi\in\Hom_A (F_*^e A,A)$ as $u\Phi_e$, where $u\in F_*^e A$.

\subsection{The ideal of $p^e$-th roots}
Our next goal is
to express in an equivalent way the condition of being $\phi$-fixed in order to perform explicit calculations.
Such an equivalent expression requires us to review the following concept (cf.\,\cite[Definition 2.2]{BlickleMustataSmith2008} and \cite[Section 5]{Katzman2008}).

\begin{defn}\label{the root ideal}
Let $J$ be an ideal of $A$.
We set $I_e (J)$ as the smallest ideal $I$ such that $I^{[p^e]}\supseteq J$.
We shall refer to $I_e (J)$ as the \emph{$e$-th root ideal} of $J$ (it is sometimes denoted $J^{[1/p^e]}$).
\end{defn}

We have the following elementary
properties of $e$-th roots (see either \cite[Section 5]{Katzman2008} or \cite[Lemma 2.4 and Proposition 2.5]{BlickleMustataSmith2008} for details).

\begin{prop}\label{propiedades del ideal raiz}
Let $J, J_1,\ldots ,J_r$ be ideals of $A$. Then, the following statements hold.
\begin{enumerate}[(a)]


\item If $J_1\subseteq J_2$ then $I_e (J_1)\subseteq I_e (J_2)$.

\item One has that
\[
I_e \left(\sum_{i=1}^r J_i\right)=\sum_{i=1}^r I_e (J_i).
\]
Note that this fact implies that it is enough to know how to calculate $I_e (J)$ when $J$ is a principal ideal.

\item Let $g\in A$. If
\[
g=\sum_{\substack{b\in\mathcal{B}_e\\ 0\leq\norm{\mathbf{\alpha}}\leq p^e-1}}g_{\mathbf{\alpha}b}^{p^e}b \mathbf{x}^{\alpha}
\]
then $I_e (g)$ is the ideal of $A$ generated by all the $g_{\mathbf{\alpha}b}$'s.


\end{enumerate}

\end{prop}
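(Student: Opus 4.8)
The plan is to prove (a), (b), (c) in the stated order, using only the defining minimality of $I_e(-)$ from Definition \ref{the root ideal} together with two facts already at hand: that $I^{[p^e]}$ is generated by the $p^e$-th powers of any generating set of $I$, and that $F_*^e A$ is $A$-free on the basis $\{b\mathbf{x}^{\mathbf{\alpha}}\mid b\in\mathcal{B}_e,\ 0\leq\norm{\mathbf{\alpha}}\leq p^e-1\}$. (That a smallest $I$ with $I^{[p^e]}\supseteq J$ exists is built into Definition \ref{the root ideal}; one may check it directly by noting that $(I_1\cap I_2)^{[p^e]}=I_1^{[p^e]}\cap I_2^{[p^e]}$ holds over the regular ring $A$, so the relevant family of ideals is closed under finite intersection, and then invoking Noetherianity.)

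Part (a) is immediate: $I_e(J_2)^{[p^e]}\supseteq J_2\supseteq J_1$, so $I_e(J_2)$ lies in $\{I\mid I^{[p^e]}\supseteq J_1\}$, whence minimality of $I_e(J_1)$ gives $I_e(J_1)\subseteq I_e(J_2)$. For part (b), the inclusion ``$\supseteq$'' follows by applying (a) to each $J_i\subseteq\sum_j J_j$ and summing. For ``$\subseteq$'', pick finite generating sets of the $I_e(J_i)$; then, since the $p^e$-th powers of their union generate the Frobenius power of the sum, $\bigl(\sum_i I_e(J_i)\bigr)^{[p^e]}=\sum_i I_e(J_i)^{[p^e]}\supseteq\sum_i J_i$, so $\sum_i I_e(J_i)\in\{I\mid I^{[p^e]}\supseteq\sum_i J_i\}$ and minimality yields $I_e\bigl(\sum_i J_i\bigr)\subseteq\sum_i I_e(J_i)$. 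The parenthetical reduction to principal ideals follows at once.

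For part (c), put $L:=\langle g_{\mathbf{\alpha}b}\rangle$, the ideal generated by all coefficients in the given expansion of $g$. Since $L^{[p^e]}$ is an ideal containing each $g_{\mathbf{\alpha}b}^{p^e}$, it contains $g=\sum_{\mathbf{\alpha},b}g_{\mathbf{\alpha}b}^{p^e}b\mathbf{x}^{\mathbf{\alpha}}$; hence $L^{[p^e]}\supseteq(g)$ and $I_e(g)\subseteq L$. Conversely, let $c_1,\dots,c_m$ generate $I_e(g)$. As $g\in(g)\subseteq I_e(g)^{[p^e]}$, write $g=\sum_{j=1}^m h_j c_j^{p^e}$ with $h_j\in A$; expanding each $h_j=\sum_{\mathbf{\alpha},b}h_{j,\mathbf{\alpha}b}^{p^e}b\mathbf{x}^{\mathbf{\alpha}}$ in the free $A$-basis of $F_*^e A$, substituting, and collecting the coefficient of $b\mathbf{x}^{\mathbf{\alpha}}$ gives
\[
g=\sum_{\mathbf{\alpha},b}\Bigl(\sum_{j=1}^m h_{j,\mathbf{\alpha}b}^{p^e}c_j^{p^e}\Bigr)b\mathbf{x}^{\mathbf{\alpha}}=\sum_{\mathbf{\alpha},b}\Bigl(\sum_{j=1}^m h_{j,\mathbf{\alpha}b}c_j\Bigr)^{p^e}b\mathbf{x}^{\mathbf{\alpha}},
\]
the last equality since $a\mapsto a^{p^e}$ is additive in characteristic $p$. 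Comparing with $g=\sum_{\mathbf{\alpha},b}g_{\mathbf{\alpha}b}^{p^e}b\mathbf{x}^{\mathbf{\alpha}}$, uniqueness of coordinates in a free basis and injectivity of the Frobenius on the reduced ring $A$ force $g_{\mathbf{\alpha}b}=\sum_{j=1}^m h_{j,\mathbf{\alpha}b}c_j\in I_e(g)$ for all $\mathbf{\alpha},b$; thus $L\subseteq I_e(g)$ and $I_e(g)=L$.

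I expect the only real obstacle to be the second inclusion in (c): it is the one point where the arithmetic of characteristic $p$ and the free-basis structure of $F_*^e A$ must be used together, and one has to be careful that the passage from $\sum_j h_{j,\mathbf{\alpha}b}^{p^e}c_j^{p^e}$ to $\bigl(\sum_j h_{j,\mathbf{\alpha}b}c_j\bigr)^{p^e}$ is valid and that the Frobenius is injective so the $p^e$-th powers may be stripped off. Everything else is formal bookkeeping with the minimality property; in particular, one could just as well prove (c) first — which simultaneously re-establishes existence of $I_e$ on principal ideals — and then deduce (b) and (a).
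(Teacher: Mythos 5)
The paper does not actually prove this proposition: it declares the properties ``elementary'' and refers the reader to \cite[Section 5]{Katzman2008} and \cite[Lemma 2.4 and Proposition 2.5]{BlickleMustataSmith2008}, so there is no in-text proof to compare against. Your argument is correct and self-contained, and it matches the standard proofs in the cited sources. Parts (a) and (b) are pure minimality bookkeeping, with the only nontrivial input for (b) being $\bigl(\sum_i I_i\bigr)^{[p^e]}=\sum_i I_i^{[p^e]}$, which you use correctly. The substance is in (c), and you handle the two inclusions cleanly: the easy inclusion $I_e(g)\subseteq L$ by exhibiting $L$ as a competitor, and the reverse inclusion by expanding the $h_j$ in the free $A$-basis of $F_*^e A$, collecting the $p^e$-th power into a single term via additivity of Frobenius, and then invoking uniqueness of coordinates together with injectivity of $x\mapsto x^{p^e}$ on the (reduced, in fact polynomial) ring $A$ to strip the powers. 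Your parenthetical remark on existence of $I_e$ via flatness of Frobenius over the regular ring $A$ (so that the family $\{I\mid I^{[p^e]}\supseteq J\}$ is closed under finite intersection) is also sound, and is the right justification for Definition \ref{the root ideal} being well-posed. One could streamline slightly by proving (c) first and deducing (a), (b) from the explicit description, as you note; either order works. No gaps.
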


Now, we are ready for expressing the condition of being $\phi$-fixed in computational terms. This is the main result of this section.
\begin{thm}\label{compatibles y fijos: como calcularlos}
Let $J\subseteq A$ be any ideal and let $\phi=u\Phi_e\in\Hom_A (F_*^e A,A)$. Then, the following statements hold.
\begin{enumerate}[(a)]

\item The image of $F_*^e J$ under $\phi$ is $I_e (uJ)$.

\item $J$ is $\phi$-compatible if and only if $I_e (uJ)\subseteq J$.

\item $J$ is $\phi$-fixed if and only if $I_e (uJ)=J$.

\end{enumerate}

\end{thm}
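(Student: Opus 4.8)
The plan is to prove part (a) first, since parts (b) and (c) follow immediately by intersecting the set-theoretic containment/equality in (a) with the statement $\phi(F_*^e J)\subseteq J$ (resp.\ $=J$), which is exactly the definition of $\phi$-compatible (resp.\ $\phi$-fixed). So the entire content lies in part (a): the image of $F_*^e J$ under $\phi=u\Phi_e$ equals $I_e(uJ)$.

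For part (a), I would argue by a double containment, reducing along the way to the principal-ideal case via Proposition \ref{propiedades del ideal raiz}(b). First observe that $\phi(F_*^e J)$ is an $A$-submodule of $A$, hence an ideal: indeed $\phi(F_*^e(a^{p^e} x))=a\phi(F_*^e x)$ for $a\in A$, so multiplying a generator $\phi(F_*^e x)$ by an arbitrary $a\in A$ keeps us inside the image. Thus $\phi(F_*^e J)$ is generated as an ideal by the elements $\phi(F_*^e g)=\Phi_e(F_*^e(ug))$ as $g$ ranges over a generating set of $J$; by additivity and Proposition \ref{propiedades del ideal raiz}(b) it suffices to treat a single $g$, i.e.\ to show $\Phi_e(F_*^e(ug))$, together with all its $A$-multiples, generates the ideal $I_e(ug)$. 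Here I would write $ug\in A$ in the normal form
\[
ug=\sum_{\substack{b\in\mathcal{B}_e\\ 0\le\norm{\boldsymbol\alpha}\le p^e-1}} c_{\boldsymbol\alpha b}^{p^e}\, b\,\mathbf{x}^{\boldsymbol\alpha}
\]
guaranteed by the free-basis description of $F_*^e A$. Applying a general element $v^{p^e}$ of $A^{[p^e]}$ and recalling that $\Phi_e$ is the $A$-linear projection onto the summand $A x_1^{p^e-1}\cdots x_d^{p^e-1}$, one computes that $\Phi_e(F_*^e(v^{p^e}ug))=\sum_{b} c_{\boldsymbol\beta b}\, v$, where $\boldsymbol\beta=(p^e-1,\dots,p^e-1)$; more generally, hitting $ug$ with the various basis monomials $b'\mathbf{x}^{\boldsymbol\gamma}$ of $F_*^e A$ before applying $\Phi_e$ isolates each individual coefficient $c_{\boldsymbol\alpha b}$. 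Hence the ideal generated by $\Phi_e(F_*^e(a\cdot ug))$ over all $a\in A$ (equivalently $a\in F_*^e A$, using the $A$-module structure) is exactly the ideal generated by all the $c_{\boldsymbol\alpha b}$, which by Proposition \ref{propiedades del ideal raiz}(c) is precisely $I_e(ug)$. Summing over generators $g$ of $J$ and invoking Proposition \ref{propiedades del ideal raiz}(b) gives $\phi(F_*^e J)=\sum_g I_e(ug)=I_e(uJ)$.

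The main obstacle is the bookkeeping in the normal-form computation: one must be careful that $\Phi_e$ only sees the top-degree monomial $x_1^{p^e-1}\cdots x_d^{p^e-1}$, and that multiplying $ug$ by a basis element $b'\mathbf{x}^{\boldsymbol\gamma}$ of $F_*^e A$ (rather than merely by a $p^e$-th power) is what lets us reach \emph{every} coefficient $c_{\boldsymbol\alpha b}$ and not just the one indexed by $(\boldsymbol\beta,1)$ — this uses that $\{b'\mathbf{x}^{\boldsymbol\gamma}\}$ is an $A$-basis of $F_*^e A$ and that $\mathcal{B}_e$ is closed under the relevant multiplications up to $p^e$-th powers in $\K$. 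Once this is set up cleanly, parts (b) and (c) are one line each: $J$ is $\phi$-compatible iff $\phi(F_*^e J)\subseteq J$ iff $I_e(uJ)\subseteq J$ by (a), and $J$ is $\phi$-fixed iff $\phi(F_*^e J)=J$ iff $I_e(uJ)=J$ by (a).
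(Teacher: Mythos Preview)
Your approach is essentially the same as the paper's: reduce to a single generator via Proposition~\ref{propiedades del ideal raiz}(b), expand $ug$ in the free basis of $F_*^e A$, and isolate each coefficient $c_{\boldsymbol\alpha b}$ by premultiplying $ug$ by a suitable element of $A$ before applying $\Phi_e$. The paper makes the multiplier explicit as $b^{-1}x_1^{p^e-\alpha_1}\cdots x_d^{p^e-\alpha_d}$, whereas you phrase it as ``hitting $ug$ with basis monomials $b'\mathbf{x}^{\boldsymbol\gamma}$''; this is the same idea.

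There is, however, a misstatement in your reduction that you should fix. You write that $\phi(F_*^e J)$ is generated \emph{as an ideal} by the elements $\phi(F_*^e g)=\Phi_e(F_*^e(ug))$ as $g$ ranges over a generating set of $J$, and then say it suffices to show that $\Phi_e(F_*^e(ug))$ together with its $A$-multiples generates $I_e(ug)$. This is false: the single element $\Phi_e(F_*^e(ug))$ generates only a principal ideal, and $I_e(ug)$ is usually not principal. The point is that $F_*^e J$ is \emph{not} generated as an $A$-module by the $F_*^e g_i$ alone (the $A$-action is through $p^e$-th powers), so the image is not generated by the $\phi(F_*^e g_i)$. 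What is true, and what your subsequent computation actually shows, is that $\phi(F_*^e\langle g\rangle)$ equals the ideal generated by $\{\Phi_e(F_*^e(a\,ug)):a\in A\}$, and that this equals $I_e(ug)$. Once you correct the sentence to reflect this, the argument is complete and matches the paper's.
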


\begin{proof}
Parts (b) and (c) follow directly form part (a). So, it is enough to prove part (a).

Proposition \ref{propiedades del ideal raiz} implies that, in order to compute $I_e (uJ)$,
one may choose a set of generators $g_1,\ldots ,g_t$ of $F_*^e J$ and then compute $I_e (ug_1)+\ldots +I_e (ug_t)$.
Now, fix $1\leq i \leq t$ and write
\[
ug_i =\sum_{\substack{b\in\mathcal{B}_e\\ 0\leq\norm{\mathbf{\alpha}}\leq p^e-1}}r_{i\mathbf{\alpha}b}^{p^e}b \mathbf{x}^{\alpha}.
\]
Applying once more Proposition \ref{propiedades del ideal raiz},
it follows that $I_e (ug_i)$ is the ideal generated by all coefficients $r_{i\mathbf{\alpha}b}$ above.
But
\[
r_{i\mathbf{\alpha}b}=\Phi_e \left(F_*^e \left(b^{-1}x_1^{p^e-\alpha_1} \ldots x_d^{p^e-\alpha_d}\right)ug_i\right)\in\phi (F_*^e J),
\]
hence $I_e (ug_i)\subseteq\phi (F_*^e J)$ for any $1\leq i \leq t$ and
\[
I_e (uJ)\subseteq\phi (F_*^e J).
\]
Conversely, note that $\phi (y)=\Phi_e (uy)\in I_e (uJ)$ for any $y\in F_*^e J$, hence $\phi (F_*^e J)\subseteq I_e (uJ)$ and therefore we obtain the desired conclusion.
\end{proof}
Before going on, we want to single out in the below result an elementary characterization of compatible ideals because it will play some role later on in this paper (cf. proof of Lemma \ref{el referee me los toca bien}); it may be regarded as a consequence of Theorem \ref{compatibles y fijos: como calcularlos}.

\begin{cor}\label{vaya parida de equivalencia}
Let $\phi=u\Phi_e\in\Hom_A (F_*^e A,A)$, and let $J\subseteq A$ be an ideal. Then, $J$ is $\phi$-compatible if and only if $J\subseteq\left(J^{[p^e]}:_A u\right)$.
\end{cor}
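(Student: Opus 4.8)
The plan is to deduce this directly from Theorem \ref{compatibles y fijos: como calcularlos}(b), which says that $J$ is $\phi$-compatible if and only if $I_e(uJ)\subseteq J$. So it suffices to show that $I_e(uJ)\subseteq J$ is equivalent to $J\subseteq\left(J^{[p^e]}:_A u\right)$. The bridge between the two is the defining property of the $e$-th root ideal in Definition \ref{the root ideal}: $I_e(K)$ is the \emph{smallest} ideal $I$ with $I^{[p^e]}\supseteq K$; equivalently, for any ideal $I$, one has $I_e(K)\subseteq I$ if and only if $K\subseteq I^{[p^e]}$. (This ``adjunction'' is immediate: if $K\subseteq I^{[p^e]}$ then $I$ is one of the competitors in the definition, so the minimal one $I_e(K)$ is contained in it; conversely if $I_e(K)\subseteq I$ then $K\subseteq I_e(K)^{[p^e]}\subseteq I^{[p^e]}$, using monotonicity of the bracket power.)

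Applying this with $K=uJ$ and $I=J$ gives that $I_e(uJ)\subseteq J$ if and only if $uJ\subseteq J^{[p^e]}$. Finally, unwinding the definition of the colon ideal, $uJ\subseteq J^{[p^e]}$ says precisely that $u\cdot r\in J^{[p^e]}$ for every $r\in J$, i.e.\ $u\in\left(J^{[p^e]}:_A J\right)$, which is the same as $J\subseteq\left(J^{[p^e]}:_A u\right)$. Chaining these equivalences with Theorem \ref{compatibles y fijos: como calcularlos}(b) yields the claim.

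There is no real obstacle here; the only point requiring a moment's care is the observation that $I_e(uJ)\subseteq J \iff uJ\subseteq J^{[p^e]}$, i.e.\ that taking $e$-th roots is left adjoint to taking $p^e$-th bracket powers on the lattice of ideals. Once that is isolated, the rest is just rewriting the containment $uJ\subseteq J^{[p^e]}$ as a colon-ideal membership. One should also note $uJ$ is literally the ideal $\{u\}J$ of $A$ (here $u$ is viewed as an element of $A$, not of $F_*^eA$), so that Definition \ref{the root ideal} applies to it verbatim; this is consistent with the usage of $I_e(uJ)$ already made in Theorem \ref{compatibles y fijos: como calcularlos}.
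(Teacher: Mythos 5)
Your proof is correct and follows essentially the same route as the paper: reduce via Theorem \ref{compatibles y fijos: como calcularlos}(b) to the containment $I_e(uJ)\subseteq J$, and then translate this into $uJ\subseteq J^{[p^e]}$ using the defining (minimality) property of the $e$-th root ideal. Your version is in fact slightly cleaner: you isolate the adjunction $I_e(K)\subseteq I\iff K\subseteq I^{[p^e]}$ explicitly and correctly invoke part (b) of the theorem, whereas the paper's proof begins by asserting that $\phi$-compatibility is equivalent to $J=I_e(uJ)$, which is really the $\phi$-\emph{fixed} condition (part (c)); the argument there still goes through once one replaces the equality by the inclusion $I_e(uJ)\subseteq J$, but your phrasing avoids the imprecision.
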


\begin{proof}
According to Theorem \ref{compatibles y fijos: como calcularlos}, $J$ is $\phi$-compatible if and only if $J=I_e (uJ)$, which is equivalent to say that $J^{[p^e]}=I_e (uJ)^{[p^e]}$. This implies, since $I_e (uJ)^{[p^e]}\supseteq uJ$, that $J\subseteq\left(J^{[p^e]}:_A u\right)$.

Conversely, assume that $J\subseteq\left(J^{[p^e]}:_A u\right)$. This is equivalent to say that $uJ\subseteq J^{[p^e]}$, which implies that $I_e (uJ)\subseteq J$ by the definition of the $e$th root ideal.
\end{proof}

\begin{notation}
Henceforth, $S$ will denote the polynomial ring $\mathbb{K}[x_1,\ldots ,x_d]$
and $S_l$ will denote the $\mathbb{K}$-vector space generated by monomials $\mathbf{x}^{\mathbf{\alpha}}$ with
$\norm{\mathbf{\alpha}}\leq l$.
\end{notation}

The following result will guarantee that the algorithm we shall introduce later on (cf.\,Algorithm \ref{el algoritmo de Moty}) terminates after a finite number of steps.

\begin{prop}\label{acotamos la norma infinito de generadores}
The following statements hold.
\begin{enumerate}[(i)]

\item For any $y\in S$, the ideal $I_e (y)$ can be generated by elements $g\in S$ such that
\[
\norm{g}\leq\frac{\norm{y}}{p^e}.
\]

\item If $J$ is $u\Phi_e$-fixed then there exists a set of generators of $J$ such that if $g$ belongs to this set then
\[
\norm{g}\leq\frac{\norm{u}}{p^e-1}.
\]

\item If $J$ is $u\Phi_e$-fixed, then $\left(S_{D_e}\cap J\right)S=J$, where
\[
D_e:=\left\lceil\frac{\norm{u}}{p^e-1}\right\rceil.
\]

\end{enumerate}

\end{prop}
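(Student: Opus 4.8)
The plan is to prove the three parts in order, since each feeds into the next. For part (i), I would use Proposition \ref{propiedades del ideal raiz}(c): writing $y = \sum_{b,\mathbf{\alpha}} y_{\mathbf{\alpha}b}^{p^e} b\,\mathbf{x}^{\mathbf{\alpha}}$ over the basis (so $b \in \mathcal{B}_e$ and $0 \le \norm{\mathbf{\alpha}} \le p^e - 1$), the ideal $I_e(y)$ is generated by the $y_{\mathbf{\alpha}b}$'s. The point is a degree bookkeeping one: each monomial of $y$ has each exponent bounded by $\norm{y}$, and when we strip off the factor $b\,\mathbf{x}^{\mathbf{\alpha}}$ and take a $p^e$-th root of the remaining monomial $\mathbf{x}^{\mathbf{\beta}}$, each exponent of $\mathbf{x}^{\mathbf{\beta}/p^e}$ is at most $\lfloor \norm{y}/p^e \rfloor \le \norm{y}/p^e$. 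Hence every generator $y_{\mathbf{\alpha}b}$ has $\norm{y_{\mathbf{\alpha}b}} \le \norm{y}/p^e$, which is exactly the claim.

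For part (ii), I would combine part (i) with Theorem \ref{compatibles y fijos: como calcularlos}(c): since $J$ is $u\Phi_e$-fixed, $J = I_e(uJ)$. Pick generators $f_1,\dots,f_s$ of $J$; then $uf_1,\dots,uf_s$ generate $F_*^e J$ as an ideal and, by Proposition \ref{propiedades del ideal raiz}(b), $J = I_e(uJ) = \sum_i I_e(uf_i)$. By part (i), each $I_e(uf_i)$ has generators of $\norm{\cdot}$-norm at most $\norm{uf_i}/p^e \le (\norm{u} + \norm{f_i})/p^e$ (using $\norm{uf_i} \le \norm{u} + \norm{f_i}$, since multiplying monomials adds exponents). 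This gives a new generating set for $J$, but with norm bounded in terms of the old generators — so this is a self-improvement step, and I would iterate it. If $M$ denotes the maximum $\norm{\cdot}$ over some generating set, one round produces a generating set with maximum norm at most $(\norm{u} + M)/p^e$. Iterating, the bound satisfies $M_{k+1} \le (\norm{u} + M_k)/p^e$, whose fixed point is $\norm{u}/(p^e - 1)$; since the map $t \mapsto (\norm{u}+t)/p^e$ is a contraction (as $p^e \ge 2$), the sequence $M_k$ converges down to $\norm{u}/(p^e-1)$ from above. Because norms are non-negative integers, after finitely many rounds we reach a generating set with $\norm{g} \le \lceil \norm{u}/(p^e-1) \rceil$ for all $g$; but I would be a touch more careful and argue we actually land at or below $\norm{u}/(p^e-1)$ itself — if $M_k > \norm{u}/(p^e-1)$ then $M_{k+1} < M_k$ strictly (being still $\ge$ the fixed point but closer), so the integer sequence strictly decreases until it first satisfies $M_k \le \norm{u}/(p^e-1)$, giving exactly the stated bound.

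For part (iii), recall $S_{D_e}$ is the $\K$-span of monomials $\mathbf{x}^{\mathbf{\alpha}}$ with $\norm{\mathbf{\alpha}} \le D_e = \lceil \norm{u}/(p^e-1)\rceil$. By part (ii), $J$ has a generating set each of whose elements $g$ satisfies $\norm{g} \le \norm{u}/(p^e-1) \le D_e$, so every such generator is a $\K$-linear combination of monomials $\mathbf{x}^{\mathbf{\alpha}}$ with $\norm{\mathbf{\alpha}} \le D_e$, i.e. $g \in S_{D_e} \cap J$. Therefore this generating set lies inside $S_{D_e} \cap J$, and the ideal it generates, which is $J$, is contained in $(S_{D_e} \cap J)S$; the reverse inclusion $(S_{D_e}\cap J)S \subseteq J$ is trivial since $S_{D_e} \cap J \subseteq J$. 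Hence $(S_{D_e} \cap J)S = J$.

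The main obstacle is part (ii): part (i) is pure bookkeeping and part (iii) is immediate once (ii) is in hand, but (ii) requires the iteration argument and the care that the integer-valued contraction actually reaches the closed interval $[0, \norm{u}/(p^e-1)]$ in finitely many steps rather than merely approaching it. The one subtlety to watch is the inequality $\norm{vw} \le \norm{v} + \norm{w}$ for the $\norm{\cdot}$-norm (coordinatewise maximum of exponents), which holds because each exponent of a monomial in $vw$ is a sum of an exponent from a monomial of $v$ and one from $w$; this is what lets the bound from part (i) be rephrased in terms of $\norm{u}$ and the current generators.
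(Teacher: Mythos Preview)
Your proof is correct and follows essentially the same route as the paper for all three parts. For part (ii) the paper shortcuts your iteration: instead of repeatedly applying part (i) and tracking a decreasing integer sequence, it lets $M$ be the \emph{minimal} integer for which $J$ admits a generating set of norm at most $M$; then one application of part (i) to $I_e(uJ)=J$ yields $M\le(\norm{u}+M)/p^e$ by minimality, and solving gives $M\le\norm{u}/(p^e-1)$ in a single step.
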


\begin{proof}
Since part (iii) follows immediately from part (ii), it is enough to show that parts (i) and (ii) hold.

We begin proving part (i). Indeed, we write
\[
y=\sum_{\substack{b\in\mathcal{B}_e\\ 0\leq\norm{\mathbf{\alpha}}\leq p^e-1}} y_{\mathbf{\alpha}b}^{p^e}b \mathbf{x}^{\mathbf{\alpha}}.
\]
In this way, for any $\mathbf{\alpha}$ and $b$ as above it follows that
\[
p^e \norm{y_{\mathbf{\alpha}b}}\leq\norm{y_{\mathbf{\alpha}b}^{p^e}}\leq\norm{y_{\mathbf{\alpha}b}^{p^e}\mathbf{x}^{\mathbf{\alpha}}}\leq\norm{y},
\]
whence part (i) holds.

Now, we prove part (ii). Let $M\geq 0$ be the minimal integer for which a set of generators of $J$ have norm at most $M$. Part (i) shows that $I_e (uJ)$ can be generated by polynomials with norm at most $\left(\norm{u}+M\right)/p^e$. In addition, as $I_e (uJ)=J$ we deduce, by the minimality of $M$, that $M\leq\left(\norm{u}+M\right)/p^e$ and therefore we conclude that $M\leq\norm{u}/(p^e-1)$, just what we finally wanted to check.
\end{proof}

\section{The algorithm through the hash operation}\label{the section of the algorithm}
The aim of this section is to describe a computational method to produce all the $u\Phi_e$-fixed ideals of $S$. As the reader will appreciate, our procedure is based on a new operation on ideals (cf.\,Definition \ref{la construccion hash}), which we hope to be of some interest in its own right.

We start with the following elementary statement, which we provide a proof for the sake of completeness. It may be regarded as an elementary consequence of Nakayama's Lemma.

\begin{lem}\label{una version del Nakayama}
Let $I\subseteq\mathfrak{m}$ be an ideal minimally generated by $s$ elements. Then, any ideal $J\subsetneq I$ is contained in some ideal $V$, where $\mathfrak{m}I\subseteq V\subseteq I$ and $\dim_{\mathbb{K}}I/V=1$.
\end{lem}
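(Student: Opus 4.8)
The plan is to reduce the statement to linear algebra over $\mathbb{K}$ in the finite-dimensional quotient $I/\mathfrak{m}I$, using Nakayama's Lemma to control the generators. First I would note that since $I$ is minimally generated by $s$ elements and $I\subseteq\mathfrak{m}$, Nakayama's Lemma tells us that $\dim_{\mathbb{K}} I/\mathfrak{m}I = s$; in particular $\mathfrak{m}I \subsetneq I$ whenever $s\geq 1$ (and the case $I=0$ is vacuous since then there is no $J\subsetneq I$). The key observation is that any $\mathbb{K}$-subspace $W$ with $\mathfrak{m}I\subseteq W\subseteq I$ is automatically an ideal of $A$: it contains $\mathfrak{m}I$, so $\mathfrak{m}W\subseteq\mathfrak{m}I\subseteq W$, and it is closed under multiplication by $\mathbb{K}$, hence by all of $A=\mathbb{K}+\mathfrak{m}$. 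So the passage ``$\mathbb{K}$-subspace between $\mathfrak{m}I$ and $I$'' $\leftrightarrow$ ``ideal between $\mathfrak{m}I$ and $I$'' is a bijection, and the problem becomes: given a proper ideal $J\subsetneq I$, find a $\mathbb{K}$-subspace $V$ with $\mathfrak{m}I\subseteq V\subseteq I$, $\dim_{\mathbb{K}} I/V = 1$, and $J\subseteq V$.

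Next I would pass to the quotient vector space $\bar I := I/\mathfrak{m}I$, which has dimension $s<\infty$. Write $\pi\colon I\to\bar I$ for the projection and set $\bar J := \pi(J)\subseteq\bar I$. The crucial point is that $\bar J$ is a \emph{proper} subspace of $\bar I$: if we had $\bar J=\bar I$, i.e.\ $J+\mathfrak{m}I = I$, then Nakayama's Lemma applied to the finitely generated module $I$ (or rather to $I/J$, which satisfies $\mathfrak{m}(I/J)=I/J$) would force $J=I$, contradicting $J\subsetneq I$. Since $\bar J$ is a proper subspace of the finite-dimensional space $\bar I$, we may choose a $\mathbb{K}$-hyperplane $\bar V\subseteq\bar I$ with $\bar J\subseteq\bar V$ and $\dim_{\mathbb{K}}\bar I/\bar V = 1$ (extend a basis of $\bar J$ to a basis of $\bar I$ and drop one of the new basis vectors). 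Then set $V := \pi^{-1}(\bar V)$, the full preimage in $I$.

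Finally I would check that this $V$ has all the required properties. By construction $\mathfrak{m}I = \pi^{-1}(0)\subseteq\pi^{-1}(\bar V) = V\subseteq I$, so by the observation above $V$ is an ideal with $\mathfrak{m}I\subseteq V\subseteq I$; moreover $I/V\cong\bar I/\bar V$ has $\mathbb{K}$-dimension $1$; and $J\subseteq\pi^{-1}(\pi(J)) = \pi^{-1}(\bar J)\subseteq\pi^{-1}(\bar V) = V$. This completes the argument. I do not anticipate a serious obstacle here: the only subtlety is making sure one records explicitly \emph{why} a $\mathbb{K}$-subspace sandwiched between $\mathfrak{m}I$ and $I$ is an ideal (so that the purely linear-algebraic hyperplane construction lands in the category of ideals) and \emph{why} $J$ being a proper subideal forces its image in $I/\mathfrak{m}I$ to be a proper subspace — both are one-line consequences of Nakayama, but they are the load-bearing steps.
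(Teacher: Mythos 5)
Your argument is essentially the paper's proof, made more explicit: both reduce to linear algebra in $I/\mathfrak{m}I$, choose a $\K$-hyperplane containing the image of $J$, and pull back. The two points you flag as load-bearing (that any $\K$-subspace between $\mathfrak{m}I$ and $I$ is automatically an ideal because $S=\K\oplus\mathfrak{m}$, and that $J\subsetneq I$ forces the image of $J$ in $I/\mathfrak{m}I$ to be a proper subspace) are left implicit in the paper, so spelling them out is an improvement rather than a departure.

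Precisely because you single out the Nakayama steps as the crux, it is worth recording a subtlety that your proof inherits from the paper's: $S=\K[x_1,\ldots,x_d]$ is not local and $\mathfrak{m}$ is not in its Jacobson radical, so from $\mathfrak{m}(I/J)=I/J$ Nakayama only yields $(1+m)(I/J)=0$ for some $m\in\mathfrak{m}$, not $I/J=0$; likewise $\dim_\K I/\mathfrak{m}I$ equals the minimal number of generators of $I_{\mathfrak m}$ over $S_{\mathfrak m}$, which may be smaller than the minimal number over $S$. Indeed the lemma as stated fails in this generality: in $\K[x]$ take $I=\langle x\rangle$ and $J=\langle x+x^2\rangle$; then $J\subsetneq I$, but $\dim_\K I/\mathfrak{m}I=1$, so the only admissible $V$ is $\mathfrak{m}I=\langle x^2\rangle$, which does not contain $J$. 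This is a defect of the paper's own statement and proof, not of your rendering of it; a careful version would either localize at $\mathfrak{m}$ or restrict the lemma to the situation in which the algorithm actually invokes it, namely ideals generated in bounded norm, where one can argue inside the finite-dimensional space $I\cap S_{D_e}$ instead.
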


\begin{proof}
Nakayama's Lemma implies that there are $g_1,\ldots ,g_s\in S$ with $I=S g_1+\ldots +Sg_s$ such that $g_1,\ldots ,g_s \pmod{\mathfrak{m}I}$ is a basis of the $s$-dimensional $\mathbb{K}$-vector space $I/\mathfrak{m}I$. In this way, it follows that any ideal $J\subsetneq I$ is contained in some $V:=SW+\mathfrak{m} I$, where $W$ is a $(s-1)$-dimensional $\mathbb{K}$-vector subspace of $I/\mathfrak{m}I$. Moreover, we have to note as well that $\dim_{\mathbb{K}}I/V=1$.
\end{proof}

From now on, we shall assume that $u\in F_*^e S$ is fixed and set
\[
D_e:=\left\lceil\frac{\norm{u}}{p^e-1}\right\rceil.
\]
The following construction will be the crucial building block of our method.

\begin{defn}\label{la construccion hash}
Given any ideal $J\subseteq S$, we define the sequence of ideals
\[
J_0:=J,\quad J_{i+1}:=\left(J_i\cap\left(J_i^{[p^e]}:_S u\right)\cap I_e (uJ_i)\cap S_{D_e}\right)S,
\]
and set
\[
J^{\#_e}:=\bigcap_{i\geq 0}J_i.
\]
When $e=1$, we shall write $J^{\#}$ instead of $J^{\#_1}$ for the sake of brevity. Hereafter, we refer to this construction as the \emph{hash operation}.
\end{defn}

Now, we list in the below statement some elementary properties satisfied by the hash operation.

\begin{lem}\label{el referee me los toca bien}
Let $J,K\subseteq S$ be ideals of $S$. Then, the following assertions hold.

\begin{enumerate}[(a)]

\item If $J\subseteq K$, then $J^{\#_e}\subseteq K^{\#_e}$.

\item If $J$ is $u\Phi_e$-fixed, then $J=J^{\#_e}$.

\end{enumerate}
\end{lem}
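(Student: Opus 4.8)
The plan is to handle the two parts separately, both by induction on the index $i$ defining the sequences $J_i$ and $K_i$ from Definition \ref{la construccion hash}, and then pass to the intersection. For part (a), I would prove by induction on $i$ that $J_i \subseteq K_i$. The base case $i=0$ is the hypothesis $J\subseteq K$. For the inductive step, I would check that each of the four building blocks appearing in the definition of $J_{i+1}$ is contained in the corresponding block for $K_{i+1}$: the term $J_i$ sits inside $K_i$ by the inductive hypothesis; the colon ideal satisfies $\left(J_i^{[p^e]}:_S u\right) \subseteq \left(K_i^{[p^e]}:_S u\right)$ because $J_i\subseteq K_i$ forces $J_i^{[p^e]}\subseteq K_i^{[p^e]}$ and the colon operation is monotone in its first argument; the root ideal satisfies $I_e(uJ_i)\subseteq I_e(uK_i)$ by Proposition \ref{propiedades del ideal raiz}(a) applied to the inclusion $uJ_i\subseteq uK_i$; and the truncation term $S_{D_e}$ is common to both. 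Intersecting these four inclusions and extending to an ideal of $S$ preserves containment, so $J_{i+1}\subseteq K_{i+1}$. Taking intersections over all $i$ gives $J^{\#_e}\subseteq K^{\#_e}$.

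For part (b), suppose $J$ is $u\Phi_e$-fixed. I would show by induction that $J_i = J$ for every $i$, from which $J^{\#_e}=\bigcap_{i\ge 0}J_i = J$ follows immediately. Again $J_0=J$ by definition. For the inductive step, assuming $J_i=J$, I must verify that
\[
\left(J\cap\left(J^{[p^e]}:_S u\right)\cap I_e(uJ)\cap S_{D_e}\right)S = J.
\]
The inclusion $\subseteq$ is clear since the intersection is contained in $J$ and extending a subset of $J$ to an ideal of $S$ keeps it inside $J$. For $\supseteq$, I note that since $J$ is $\phi$-fixed it is in particular $\phi$-compatible, so by Corollary \ref{vaya parida de equivalencia} we have $J\subseteq\left(J^{[p^e]}:_S u\right)$; by Theorem \ref{compatibles y fijos: como calcularlos}(c) we have $I_e(uJ)=J$, hence $J\subseteq I_e(uJ)$; trivially $J\subseteq J$. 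Therefore the first three intersectands all contain $J$, and the intersection reduces to $J\cap S_{D_e}$. Finally, Proposition \ref{acotamos la norma infinito de generadores}(iii) gives precisely $\left(J\cap S_{D_e}\right)S = J$, which completes the inductive step.

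The only genuinely delicate point is the use of Proposition \ref{acotamos la norma infinito de generadores}(iii) to recover $J$ from its truncation $J\cap S_{D_e}$; everything else is monotonicity of the standard ideal operations (colons, root ideals, intersections, and ideal extension) together with the already-established equivalences for $\phi$-compatibility and $\phi$-fixedness. I expect no real obstacle here, since the statement has been engineered so that the four operations in the hash construction each fix a $\phi$-fixed ideal, and the truncation at degree $D_e$ is exactly the bound proved in Proposition \ref{acotamos la norma infinito de generadores}. One should take a little care to state the inductive claims at the level of the whole sequences $J_i$ (respectively the equalities $J_i=J$) rather than only at the level of $J^{\#_e}$, since the recursion in Definition \ref{la construccion hash} is defined stepwise.
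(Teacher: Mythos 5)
Your proposal is correct and follows essentially the same route as the paper: both parts are proved by induction on $i$ that $J_i\subseteq K_i$ (respectively $J_i=J$), using monotonicity of the colon and root-ideal operations for (a), and Corollary \ref{vaya parida de equivalencia}, Theorem \ref{compatibles y fijos: como calcularlos}(c), and Proposition \ref{acotamos la norma infinito de generadores}(iii) for (b). Your slightly more explicit two-inclusion check in the inductive step of (b) is only a cosmetic difference from the paper's direct computation.
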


\begin{proof}
First of all, we prove part (a); indeed, we show by increasing induction on $i\geq 0$ that $J_i\subseteq K_i$, where $J_i,K_i$ are as in Definition \ref{la construccion hash}. This is clearly true for $i=0$.

Now, we assume that $i\geq 0$ and that $J_i\subseteq K_i$. Since $uJ_i\subseteq uK_i$, it follows from part (a) of Proposition \ref{propiedades del ideal raiz} that $I_e (uJ_i)\subseteq I_e (uK_i)$. Moreover, since $J_i^{[p^e]}\subseteq K_i^{[p^e]}$ it also follows that $\left(J_i^{[p^e]}:_S u\right)\subseteq\left(K_i^{[p^e]}:_S u\right)$. Summing up, one has that
\begin{align*}
J_{i+1}& = \left(J_i\cap\left(J_i^{[p^e]}:_S u\right)\cap I_e (uJ_i)\cap S_{D_e}\right)S\\ & \subseteq\left(K_i\cap\left(K_i^{[p^e]}:_S u\right)\cap I_e (uK_i)\cap S_{D_e}\right)S=K_{i+1},
\end{align*}
whence part (a) holds.

In this way, it only remains to prove that part (b) is also true; indeed, suppose now that $J$ is $u\Phi_e$-fixed. We shall show by increasing induction on $i\geq 0$ that $J=J_i$ for all $i\geq 0$, where $J_i$ is as in Definition \ref{la construccion hash}. This is clearly true for $i=0$.

Now, we assume that $i\geq 0$ and that $J=J_i$. Since $J$ is $u\Phi_e$-fixed, one has that $J=I_e (uJ)=I_e (uJ_i)$; moreover, it follows from Corollary \ref{vaya parida de equivalencia} and part (iii) of Proposition \ref{acotamos la norma infinito de generadores} respectively that $J\subseteq\left(J^{[p^e]}:_S u\right)=\left(J_i^{[p^e]}:_S u\right)$ and $\left(S_{D_e}\cap J_i\right)S=\left(S_{D_e}\cap J\right)S=J$, whence
\[
J_{i+1} = \left(J_i\cap\left(J_i^{[p^e]}:_S u\right)\cap I_e (uJ_i)\cap S_{D_e}\right)S=J,
\]
just what we finally wanted to check.
\end{proof}
Next result may be regarded as an elementary consequence of Lemma \ref{el referee me los toca bien}.

\begin{cor}\label{los fijos quedan atrapados por la operacion hash}
For any ideal $J\subseteq S$, $J^{\#_e}$ contains all the $u\Phi_e$-fixed ideals which are contained in $J$.
\end{cor}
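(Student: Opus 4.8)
The plan is to deduce this immediately from the two parts of Lemma \ref{el referee me los toca bien}, so the argument is essentially a one-line combination; no genuine obstacle is expected here.

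First I would fix an arbitrary $u\Phi_e$-fixed ideal $K\subseteq S$ with $K\subseteq J$, and the goal becomes to show $K\subseteq J^{\#_e}$. Since $K$ is $u\Phi_e$-fixed, part (b) of Lemma \ref{el referee me los toca bien} gives $K=K^{\#_e}$. On the other hand, from $K\subseteq J$ and the monotonicity statement in part (a) of the same lemma we get $K^{\#_e}\subseteq J^{\#_e}$. Chaining these two facts yields
\[
K=K^{\#_e}\subseteq J^{\#_e},
\]
which is exactly what we want. As $K$ was an arbitrary $u\Phi_e$-fixed ideal contained in $J$, this shows that $J^{\#_e}$ contains all of them.

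The only thing worth double-checking is that the hypotheses of both parts of Lemma \ref{el referee me los toca bien} are genuinely available: part (b) needs $K$ to be $u\Phi_e$-fixed (true by assumption), and part (a) needs the inclusion $K\subseteq J$ (also true by assumption), so no additional work is required. I would write the proof in two sentences and leave it at that.
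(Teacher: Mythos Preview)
Your proposal is correct and follows exactly the same approach as the paper: fix a $u\Phi_e$-fixed ideal contained in $J$, apply part (b) of Lemma \ref{el referee me los toca bien} to get that it equals its own hash, and then part (a) to conclude it lies in $J^{\#_e}$. The paper's proof is literally the one-line chain $I=I^{\#_e}\subseteq J^{\#_e}$, so there is nothing to add or change.
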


\begin{proof}
Let $I\subseteq J$ be any $u\Phi_e$-fixed ideal. Applying Lemma \ref{el referee me los toca bien} it follows that $I=I^{\#_e}\subseteq J^{\#_e}$.
\end{proof}

The introduction of the hash operation is motivated by the following result.

\begin{thm}\label{locura de jueves}
$J^{\#_e}$ is the greatest $u\Phi_e$-fixed ideal contained in $J$.
\end{thm}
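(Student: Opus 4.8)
The plan is to show two things: first, that $J^{\#_e}$ is itself $u\Phi_e$-fixed (so it is a genuine member of the set of fixed ideals contained in $J$); and second, that it contains every $u\Phi_e$-fixed ideal contained in $J$, which is exactly Corollary \ref{los fijos quedan atrapados por la operacion hash} already in hand. Combining these gives that $J^{\#_e}$ is the unique largest element of that set. So the entire substance of the proof is the first point: establishing $I_e(uJ^{\#_e}) = J^{\#_e}$.

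To prove that $J^{\#_e}$ is $u\Phi_e$-fixed, I would first observe that the sequence $J = J_0 \supseteq J_1 \supseteq J_2 \supseteq \cdots$ is a descending chain of ideals (each $J_{i+1}$ is by construction contained in $J_i$), so by Noetherianity it stabilizes: there is an $N$ with $J_i = J_N = J^{\#_e}$ for all $i \geq N$. Thus $J^{\#_e}$ satisfies the fixed-point equation of the construction:
\[
J^{\#_e} = \left(J^{\#_e}\cap\left((J^{\#_e})^{[p^e]}:_S u\right)\cap I_e (uJ^{\#_e})\cap S_{D_e}\right)S.
\]
From this single identity I want to extract $I_e(uJ^{\#_e}) = J^{\#_e}$. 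Write $K := J^{\#_e}$ for brevity. The displayed equation forces $K \subseteq I_e(uK)$ (since $K$ equals an intersection one of whose terms is $I_e(uK)$, after extension), giving one inclusion. For the reverse inclusion $I_e(uK) \subseteq K$, I would use that the equation also gives $K \subseteq (K^{[p^e]}:_S u)$, i.e. $uK \subseteq K^{[p^e]}$, and then by the defining minimality property of the $e$-th root ideal (Definition \ref{the root ideal}), $I_e(uK) \subseteq K$. Hence $I_e(uK) = K$, which by Theorem \ref{compatibles y fijos: como calcularlos}(c) says precisely that $K = J^{\#_e}$ is $u\Phi_e$-fixed.

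The one subtlety I need to handle carefully — and the place I'd expect the argument to need the most care — is the role of the truncation $\cap\, S_{D_e}$ followed by extension back to $S$. After stabilization, the identity $K = (K \cap (K^{[p^e]}:_S u) \cap I_e(uK) \cap S_{D_e})S$ a priori only tells me that $K$ is \emph{generated in degrees bounded by $D_e$} by elements lying in $K^{[p^e]}:_S u$ and in $I_e(uK)$; I must make sure this really does entail the honest inclusions $K \subseteq (K^{[p^e]}:_S u)$ and $K \subseteq I_e(uK)$ as ideals of $S$, not merely on the truncated piece. This works because $(K^{[p^e]}:_S u)$ and $I_e(uK)$ are genuine ideals of $S$: if the generators of $K$ all lie in an ideal $L$, then $K \subseteq L$. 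So once I know $K = (\text{something} \cap L)S$ with $L$ an ideal, I get $K \subseteq LS = L$. Applying this with $L = (K^{[p^e]}:_S u)$ and with $L = I_e(uK)$ finishes the proof; then Corollary \ref{los fijos quedan atrapados por la operacion hash} supplies maximality and we are done.
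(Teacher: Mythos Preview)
Your overall strategy matches the paper's: reduce to showing $J^{\#_e}$ is $u\Phi_e$-fixed (maximality being Corollary \ref{los fijos quedan atrapados por la operacion hash}), argue that the descending chain $\{J_i\}$ stabilizes, and then read off both inclusions $K\subseteq I_e(uK)$ and $K\subseteq (K^{[p^e]}:_S u)$ from the stabilized identity, the second giving $I_e(uK)\subseteq K$ via the defining minimality of $I_e$. Your careful discussion of why the truncation-then-extension $(\,\cdot\,\cap S_{D_e})S$ does not obstruct these inclusions is correct and arguably clearer than the paper's presentation.

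There is, however, a genuine gap in your stabilization step. You write that the descending chain $J_0\supseteq J_1\supseteq\cdots$ stabilizes ``by Noetherianity,'' but Noetherianity is the \emph{ascending} chain condition and says nothing about descending chains of ideals; for instance $(x)\supsetneq (x^2)\supsetneq (x^3)\supsetneq\cdots$ in $\K[x]$ never stabilizes. The correct reason---and the one the paper gives---is that for $i\geq 1$ each $J_i$ is by construction generated inside the finite-dimensional $\K$-vector space $S_{D_e}$, so the chain $J_1\cap S_{D_e}\supseteq J_2\cap S_{D_e}\supseteq\cdots$ of subspaces of $S_{D_e}$ must stabilize, and since $J_i=(J_i\cap S_{D_e})S$ this forces the chain of ideals to stabilize as well. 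Once you replace your Noetherianity appeal with this finite-dimensionality argument, the proof is complete and essentially identical to the paper's.
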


\begin{proof}
We only need to check that $J^{\#_e}$ is $u\Phi_e$-fixed, because the other assertions of the Theorem are clear regarding Corollary \ref{los fijos quedan atrapados por la operacion hash}.

First of all, we prove that $J^{\#_e}$ is $u\Phi_e$-compatible; indeed, set
\[
n:=\min\{i\in\mathbb{N}\mid\ J_i=J_{i+1}\},
\]
where $J_i$ is as in Definition \ref{la construccion hash}. We have to point out that $J^{\#_e}=J_n$, because the decreasing sequence of ideals $\{J_i\}_{i\in\mathbb{N}}$ stabilizes rigidly (this is due to the fact that, at each step, we are intersecting with the finite dimensional $\K$-vector space $S_{D_e}$). Therefore, one has that
\begin{align*}
J^{\#_e}=J_n=J_{n+1}=& \left(J_n\cap\left(J_n^{[p^e]}:_S u\right)\cap I_e (uJ_n)\cap S_{D_e}\right)S\\ &\subseteq\left(J_n^{[p^e]}:_S u\right)=\left(\left(J^{\#_e}\right)^{[p^e]}:_S u\right).
\end{align*}
From the previous displayed upper inclusion it follows, by means of Corollary \ref{vaya parida de equivalencia}, that $J^{\#_e}$ is $u\Phi_e$-compatible, whence $I_e \left(uJ^{\#_e}\right)\subseteq J^{\#_e}$. A similar argument shows that
\[
J^{\#_e}=J_n=J_{n+1}\subseteq I_e (uJ_n)=I_e \left(uJ^{\#_e}\right)
\]
and therefore we can ensure that $J^{\#_e}$ is $u\Phi_e$-fixed, just what we wanted to show.
\end{proof}




\subsection{The statement of the algorithm}

Now, we introduce our promised algorithm. More precisely, the next result is a recursive procedure for producing all the $u\Phi_e$-fixed ideals of $S$.

This is the main result of this paper.

\begin{thm}\label{Moty nos da un algoritmo}
Let $I\subseteq\mathfrak{m}$. The set $\FP_e (I)$ of all $u\Phi_e$-fixed ideals contained in $I$ is given recursively as $\FP_e (\langle 0\rangle)=\{\langle 0\rangle\}$ and, for $I\neq\langle 0\rangle$, defined as the union of $\{I^{\#_e}\}$ (whenever $I^{\#_e}$ is $u\Phi_e$-fixed) and
\[
\bigcup\left\{\FP_e (V)\mid\quad\mathfrak{m}I^{\#_e}\subseteq V\subseteq I^{\#_e},\quad\dim_{\mathbb{K}} I^{\#_e}/V=1\right\}.
\]
Moreover, if $\K$ is finite then this recursion is finite in the sense that the resulting execution tree is finite.
\end{thm}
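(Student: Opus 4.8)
The plan is to verify two things: correctness of the recursion (it produces exactly $\FP_e(I)$), and finiteness of the execution tree when $\K$ is finite. For correctness, I would argue by Noetherian-style induction on $I$. The base case $I=\langle 0\rangle$ is trivial since $\langle 0\rangle$ is clearly $u\Phi_e$-fixed and is the only ideal contained in it. For the inductive step, fix $I\neq\langle 0\rangle$ and let $K$ be any $u\Phi_e$-fixed ideal with $K\subseteq I$. By Theorem \ref{locura de jueves}, $K\subseteq I^{\#_e}$, since $I^{\#_e}$ is the greatest $u\Phi_e$-fixed ideal contained in $I$ and in fact $I^{\#_e}$ is itself $u\Phi_e$-fixed. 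So there are two cases: either $K=I^{\#_e}$, in which case $K$ is picked up by the term $\{I^{\#_e}\}$; or $K\subsetneq I^{\#_e}$, in which case Lemma \ref{una version del Nakayama} applied to the ideal $I^{\#_e}\subseteq\mathfrak m$ (note $I\subseteq\mathfrak m$ forces $I^{\#_e}\subseteq\mathfrak m$) gives an intermediate ideal $V$ with $\mathfrak m I^{\#_e}\subseteq V\subseteq I^{\#_e}$, $\dim_{\K} I^{\#_e}/V=1$, and $K\subseteq V$. Then $K\in\FP_e(V)$, and $V$ ranges exactly over the index set of the big union, so $K$ is produced recursively. Conversely, every ideal produced by the recursion is $u\Phi_e$-fixed and contained in $I$: the term $\{I^{\#_e}\}$ is only included when $I^{\#_e}$ is $u\Phi_e$-fixed (and $I^{\#_e}\subseteq I$ always, by Theorem \ref{locura de jueves} — actually this is already clear from the definition of the $J_i$), and each $\FP_e(V)$ consists of $u\Phi_e$-fixed ideals contained in $V\subseteq I^{\#_e}\subseteq I$ by the induction hypothesis. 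One subtle point worth noting: the recursion is only well-founded because whenever we recurse from $I$ into some $V$ we have $V\subseteq I^{\#_e}\subseteq I$, and moreover $V\subsetneq I$ strictly — indeed $\dim_\K I^{\#_e}/V=1$ shows $V\subsetneq I^{\#_e}\subseteq I$ unless $I^{\#_e}=I$, but even then $V\subsetneq I$; so the recursion eventually reaches ideals strictly smaller, and by Noetherianity descending chains stabilise, which is enough to make the induction legitimate, though for genuine finiteness of the \emph{tree} we need more, which is the second part.

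For finiteness of the execution tree when $\K$ is finite, the key observation is that by Proposition \ref{acotamos la norma infinito de generadores}(iii), every $u\Phi_e$-fixed ideal $J$ satisfies $(S_{D_e}\cap J)S=J$, so it is determined by the finite-dimensional $\K$-vector space $S_{D_e}\cap J\subseteq S_{D_e}$; since $\K$ is finite, $S_{D_e}$ is a finite set, so there are only finitely many $u\Phi_e$-fixed ideals altogether. This immediately bounds the number of leaves. To bound the tree itself, I would argue that at each node the recursion either terminates (when $I^{\#_e}$ is $u\Phi_e$-fixed and equals the only fixed ideal in that branch — in fact when $I^{\#_e}$ is fixed, every fixed ideal $\subseteq I$ is $\subseteq I^{\#_e}$, so we might still need to recurse into the $V$'s to find the \emph{strictly} smaller ones) or branches into finitely many children $\FP_e(V)$, each with $V\subsetneq I$. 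The finiteness of branching at each node comes from the fact that, $\K$ being finite, there are only finitely many subspaces $W$ of codimension $1$ in the finite-dimensional space $I^{\#_e}/\mathfrak m I^{\#_e}$, hence finitely many admissible $V$. So the tree is finitely branching, and it remains to show it has no infinite path. An infinite path would give an infinite strictly descending chain $I\supsetneq I^{\#_e}\supseteq V_1\supsetneq\cdots$, contradicting the Noetherian property of $S$. By König's lemma, a finitely branching tree with no infinite path is finite.

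The main obstacle I anticipate is the bookkeeping around the relationship between $I$, $I^{\#_e}$, and the $V$'s, and in particular making the well-foundedness and the no-infinite-path arguments precise: one must check that recursing from $I$ to $V$ genuinely decreases something (it does, since $V\subsetneq I^{\#_e}\subseteq I$, hence $V\subsetneq I$), and that the ``termination'' clause — output $I^{\#_e}$ when it is fixed — does not cause us to miss any fixed ideal, which it does not precisely because of Theorem \ref{locura de jueves} and Lemma \ref{una version del Nakayama} handling the strictly smaller ones via the $V$'s. A second, more superficial obstacle is simply that one needs $\K$ finite (not merely $F$-finite) for the tree to be finite, since $F$-finiteness alone does not make $S_{D_e}$ a finite set; this is exactly where the hypothesis is used, and it enters only through the counting of codimension-one subspaces and the total count of fixed ideals. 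Neither step requires any computation beyond invoking Proposition \ref{acotamos la norma infinito de generadores}, Lemma \ref{una version del Nakayama}, Theorem \ref{locura de jueves}, and König's lemma.
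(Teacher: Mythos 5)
Your overall structure---splitting on $J=I^{\#_e}$ versus $J\subsetneq I^{\#_e}$, invoking Theorem \ref{locura de jueves} and Lemma \ref{una version del Nakayama}, and treating correctness and finiteness separately---is the same as the paper's. However, there is a genuine gap in the justification of well-foundedness, and it occurs twice: once in the ``Noetherian-style induction'' for correctness, and once in the ``no infinite path'' step for finiteness of the tree. You write that ``by Noetherianity descending chains stabilise'' and that an infinite path $I\supsetneq I^{\#_e}\supseteq V_1\supsetneq\cdots$ would ``contradict the Noetherian property of $S$.'' This is false: Noetherian means the \emph{ascending} chain condition, not the descending one. In $S=\K[x_1,\ldots,x_d]$ the chain $\langle x_1\rangle\supsetneq\langle x_1^2\rangle\supsetneq\langle x_1^3\rangle\supsetneq\cdots$ never stabilizes. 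So the mere observation $V\subsetneq I$ does not make your induction well-founded, and it does not exclude infinite paths. As written, both halves of your argument are incomplete at exactly the point where termination has to be established.

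The paper avoids this by inducting on the finite quantity $t:=\dim_\K(I\cap S_{D_e})$. The ingredient you are missing is that, by the very definition of the hash operation, $I^{\#_e}=J_{n+1}=(\cdots\cap S_{D_e})S$ is generated by elements of $S_{D_e}$. Hence if $V\subsetneq I^{\#_e}$ then necessarily $V\cap S_{D_e}\subsetneq I^{\#_e}\cap S_{D_e}\subseteq I\cap S_{D_e}$ (otherwise $I^{\#_e}=(I^{\#_e}\cap S_{D_e})S=(V\cap S_{D_e})S\subseteq V$). So $t$ strictly drops on every recursion from $I$ to $V$, and since $0\le t\le\dim_\K S_{D_e}<\infty$, this both legitimizes the induction and bounds the length of every chain produced by the algorithm by $\dim_\K S_{D_e}$---which, together with the finite branching you correctly extract from $\K$ being finite, is exactly what gives finiteness of the execution tree. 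Your appeal to Proposition \ref{acotamos la norma infinito de generadores}(iii) and K\"onig's lemma is reasonable scaffolding, but the load-bearing fact is the strict decrease of $\dim_\K(\,\cdot\,\cap S_{D_e})$, not any descending chain condition on ideals.
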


\begin{proof}
Firstly, we show that if $J\subseteq I$ is $u\Phi_e$-fixed then $J\in\FP_e (I)$. We shall proceed by increasing induction on $t:=\dim_{\mathbb{K}} (I\cap S_{D_e})$; indeed, if $t=0$ then $J\subseteq I^{\#_e}=\langle 0\rangle$ and therefore $J\in\{\langle 0\rangle\}=\FP_e (I)$.

Now, let $J\subseteq I$ be such that $t\geq 1$. If $J= I^{\#_e}$ then we are done by Corollary \ref{los fijos quedan atrapados por la operacion hash}. Thus, we assume that $J\subsetneq I^{\#_e}$. Since $I\subseteq\mathfrak{m}$, Lemma \ref{una version del Nakayama} says us that we can find an ideal $\mathfrak{m}I^{\#_e}\subseteq V\subsetneq  I^{\#_e}$ such that $\dim_{\mathbb{K}} I^{\#_e}/V=1$ and $J\subseteq V$. Furthermore, by construction, $I^{\#_e}$ can be generated by elements in $S_{D_e}$, hence $V\cap S_{D_e}\subsetneq I^{\#_e}\cap S_{D_e}$ and therefore the induction hypothesis implies that $J\in\FP_e (V)\subseteq\FP_e (I)$.

Finally, we have to point out that our foregoing inductive argument shows that the chains of $V^{\#_e}$'s produced in this recursion have length at most $\dim_{\mathbb{K}}S_{D_e}$, hence the second statement follows too.
\end{proof}

In this way, we can turn Theorem \ref{Moty nos da un algoritmo} into an effective method to calculate all the $u\Phi_e$-fixed ideals of any polynomial ring having a finite field as field of coefficients as follows.

\begin{algo}\label{el algoritmo de Moty}
Let $\K$ be a finite field of prime characteristic, set $S:=\K [x_1,\ldots ,x_d]$ and let $u\in S$. These data act as the input of the procedure. Moreover, we initialize $I$ as the whole ring $S$ and $L$ as the empty list $\{\}$.
\begin{enumerate}[(i)]

\item Compute $I^{\#_e}$. Assign to $I$ the value of $I^{\#_e}$.

\item If $I$ is not in the list $L$, then add it.


\item If $I=0$, then stop and output the list $L$.

\item If $I\neq 0$ but principal, assign to $I$ the value of $\mathfrak{m}I$ and come back to step (i).

\item If $I\neq 0$ and not principal, then compute
\[
\left\{V\quad\text{ideal}\mid\quad\mathfrak{m}I\subseteq V\subseteq I,\quad\dim_{\mathbb{K}} I/V=1\right\}.
\]
For each element $V$ of the previous set, come back to step (i).

\end{enumerate}
At the end of this method, the list $L$ contain all the $u\Phi_e$-fixed ideals of $S$ which are contained in $\mathfrak{m}$.
\end{algo}

\begin{rem}
The reader should notice that step (v) of the previous method is the only reason for which we have to assume that our coefficient field $\K$ is finite; otherwise, the set $\left\{V\quad\text{ideal}\mid\quad\mathfrak{m}I\subseteq V\subseteq I,\quad\dim_{\mathbb{K}} I/V=1\right\}$ is not finite.
\end{rem}

\begin{rem}
It is worth mentioning the following facts about the complexity of Algorithm \ref{el algoritmo de Moty}. On one hand, as pointed out during the proof of Theorem \ref{Moty nos da un algoritmo}, each chain of fixed ideals produced by our method has length at most $\dim_{\K} (S_{D_e})$; however, we have no control about how many chains of fixed ideals can appear. On the other hand, given an ideal $J$ of $S$, the calculation of the $e$-th root $I_e (J)$ is linear not only in $p^e$, but also in the number of generators of $J$; moreover, if $\K$ is field of $q$ elements ($q=p^f$ for some $f\geq 1$), then the cardinality of
\[
\left\{V\quad\text{ideal}\mid\quad\mathfrak{m}I\subseteq V\subseteq I,\quad\dim_{\mathbb{K}} I/V=1\right\}.
\]
is exactly $1+q+q^2+\ldots+q^{t-1}$, where $t$ denotes the number of generators of $J$; regardless, we have no control about what dimensions of $J/\mathfrak{m}J$ one finds during the recursion.
\end{rem}








We end this section with the following result, which may be regarded as an elementary consequence of the very definition of the hash operation.

\begin{cor}
Let $\K$ be any $F$-finite field of prime characteristic $p$, set $S:=\K [x_1,\ldots ,x_d]$, and let $u\in S$. Then, the ideal $\langle u\rangle$ is a minimal $\phi$-fixed ideal, where $\phi:=u^{p^e-1}\Phi_e$.
\end{cor}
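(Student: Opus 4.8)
The plan is to treat separately the two halves of the statement: (i) that $\langle u\rangle$ is $\phi$-fixed, for $\phi=u^{p^e-1}\Phi_e$, and (ii) that $\langle u\rangle$ is a \emph{minimal} element of the poset of nonzero $\phi$-fixed ideals, i.e.\ that the only $\phi$-fixed ideal properly contained in $\langle u\rangle$ is $\langle 0\rangle$. Throughout I shall use that $S$ is a domain, and I may assume $u\neq 0$, the statement being vacuous otherwise.

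For (i) I would apply Theorem~\ref{compatibles y fijos: como calcularlos}(c) to the map $u^{p^e-1}\Phi_e$ — so that the distinguished element playing the role of ``$u$'' there is $u^{p^e-1}$ — reducing the claim to $I_e\bigl(u^{p^e-1}\langle u\rangle\bigr)=\langle u\rangle$. Since $u^{p^e-1}\langle u\rangle=\langle u^{p^e}\rangle=\langle u\rangle^{[p^e]}$, this is exactly the standard inverse relation $I_e\bigl(\langle u\rangle^{[p^e]}\bigr)=\langle u\rangle$ between $e$-th roots and Frobenius powers; it can also be extracted from Proposition~\ref{propiedades del ideal raiz}(c) by writing $u^{p^e}=u^{p^e}\cdot 1\cdot\mathbf{x}^{\mathbf{0}}$ in a $\K^{p^e}$-basis of $\K$ containing $1$. (Equivalently, one can check straight from Definition~\ref{la construccion hash} that every term in the sequence defining $\langle u\rangle^{\#_e}$ equals $\langle u\rangle$, so that $\langle u\rangle^{\#_e}=\langle u\rangle$, and then conclude by Theorem~\ref{locura de jueves}; this is the sense in which the corollary ``follows from the hash operation''.)

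For (ii), let $J$ be a nonzero $\phi$-fixed ideal with $J\subseteq\langle u\rangle$, and set $\mathfrak{a}:=(J:_S u)$. Because $S$ is a domain and $J\subseteq uS$, one has $J=u\mathfrak{a}$, and $\mathfrak{a}\neq\langle 0\rangle$ since $J\neq\langle 0\rangle$. By Theorem~\ref{compatibles y fijos: como calcularlos}, $J=I_e\bigl(u^{p^e-1}J\bigr)=I_e\bigl(u^{p^e}\mathfrak{a}\bigr)$, so by the very definition of the $e$-th root ideal $J^{[p^e]}=I_e\bigl(u^{p^e}\mathfrak{a}\bigr)^{[p^e]}\supseteq u^{p^e}\mathfrak{a}$. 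On the other hand $J^{[p^e]}=(u\mathfrak{a})^{[p^e]}=u^{p^e}\mathfrak{a}^{[p^e]}$, and cancelling the nonzerodivisor $u^{p^e}$ gives $\mathfrak{a}^{[p^e]}\supseteq\mathfrak{a}$; combined with the trivial inclusion $\mathfrak{a}^{[p^e]}\subseteq\mathfrak{a}$ this yields $\mathfrak{a}=\mathfrak{a}^{[p^e]}$. As $p^e\geq 2$, it follows that $\mathfrak{a}=\mathfrak{a}^{[p^e]}\subseteq\mathfrak{a}^2\subseteq\mathfrak{a}$, hence $\mathfrak{a}=\mathfrak{a}^2$; by Nakayama's lemma $\mathfrak{a}=\langle\varepsilon\rangle$ for some idempotent $\varepsilon\in S$, and since $S$ is a domain $\mathfrak{a}\in\{\langle 0\rangle,S\}$. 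As $\mathfrak{a}\neq\langle 0\rangle$, we conclude $\mathfrak{a}=S$ and therefore $J=uS=\langle u\rangle$, as wanted.

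I do not anticipate a genuine obstacle here: the argument is essentially a reformulation of the $\phi$-fixed condition through Theorem~\ref{compatibles y fijos: como calcularlos}, followed by the elementary fact that a Frobenius-power-stable ideal of the domain $S$ must be $\langle 0\rangle$ or $S$. The only points demanding a little care are the passage to the colon ideal $\mathfrak{a}=(J:_Su)$ (legitimate precisely because $S$ is a domain and $u\neq 0$) and the bookkeeping with the operations $(\,\cdot\,)^{[p^e]}$ and $I_e(\,\cdot\,)$; in particular no finiteness hypothesis on $\K$ is needed for this corollary.
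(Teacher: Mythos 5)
Your proof of part (i) coincides with the paper's, which likewise verifies $I_e(u^{p^e-1}\cdot\langle u\rangle)=I_e(\langle u^{p^e}\rangle)=\langle u\rangle$ and invokes Theorem~\ref{compatibles y fijos: como calcularlos}(c). For the minimality in part (ii), however, you take a genuinely different route. The paper stays inside its own hash machinery: since $\langle u\rangle$ is principal, Lemma~\ref{una version del Nakayama} forces any proper $\phi$-fixed $I\subsetneq\langle u\rangle$ into $\mathfrak{m}\langle u\rangle$, Corollary~\ref{los fijos quedan atrapados por la operacion hash} then gives $I\subseteq\bigl(\mathfrak{m}\langle u\rangle\bigr)^{\#_e}$, and the paper argues this latter ideal vanishes by a norm count against the bound $D_e=\norm{u}$. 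Your argument instead factors $J=u\mathfrak{a}$ with $\mathfrak{a}=(J:_S u)$, deduces $\mathfrak{a}^{[p^e]}=\mathfrak{a}$ from $J=I_e(u^{p^e}\mathfrak{a})$ after cancelling the nonzerodivisor $u^{p^e}$, and then uses $\mathfrak{a}=\mathfrak{a}^{[p^e]}\subseteq\mathfrak{a}^2$ together with Nakayama and integrality of $S$ to force $\mathfrak{a}\in\{\langle 0\rangle,S\}$. This is correct and in fact cleaner: it bypasses the hash operation and the norm bookkeeping entirely, and it is more robust than the paper's norm estimate, whose claim that every minimal generator $g$ of $\mathfrak{m}\langle u\rangle$ has $\norm{g}\geq\norm{u}+1$ can fail (e.g.\ $u=x_1^2$, $g=ux_2$), so that $\bigl(\mathfrak{m}\langle u\rangle\bigr)^{\#_e}=0$ need not hold after a single step as the paper's wording suggests. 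What the paper's route buys in exchange is an illustration of the hash operation in action, which is the stated purpose of placing the corollary at the end of that section; your route buys a shorter and more transparent argument that also makes explicit which hypotheses ($S$ a domain, $u\neq 0$, $p^e\geq 2$) carry the weight.
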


\begin{proof}
We have to check that $\langle u\rangle$ is a minimal non-zero $\phi$-fixed ideal of $S$. Firstly, we show that $\langle u\rangle$ is $\phi$-fixed; indeed,
\[
I_e (u^{p^e-1}\cdot\langle u\rangle)=I_e (u^{p^e})=\langle u\rangle ,
\]
whence $\langle u\rangle$ is $\phi$-fixed. So, it only remains to prove that $\langle u\rangle$ is a minimal $\phi$-fixed ideal of $S$. First of all, notice that
\[
D_e= \left\lceil\frac{\norm{u^{p^e-1}}}{p^e-1}\right\rceil =\norm{u}.
\]
On the other hand, as $\langle u\rangle$ is principal it follows, combining Lemma \ref{una version del Nakayama} and Corollary \ref{los fijos quedan atrapados por la operacion hash}, that if $I\subsetneq\langle u\rangle$ is $\phi$-fixed, then $I\subseteq\left(\langle u\rangle\cdot\mathfrak{m}\right)^{\#_e}$. This implies that any element $g\in\langle u\rangle\cdot\mathfrak{m}$ which forms part of a system of generators for $\langle u\rangle\cdot\mathfrak{m}$ is such that
\[
\norm{g}\geq\norm{u}+1>\norm{u}.
\]
From this strict lower inequality it follows that $S_{D_e}\cap\left(\langle u\rangle\right)$ is empty, whence $\left(\langle u\rangle\cdot\mathfrak{m}\right)^{\#_e}=0$ and therefore $I=0$, just what we finally wanted to check.
\end{proof}

\section{Examples}\label{examples section}

The goal of this section is to present some interesting calculations which were carried out with an implementation of the algorithm presented in this manuscript. Macaulay2 (cf.\,\cite{M2}) has been used extensively both in constructing and exploring examples, as well as implementing the procedure described herein.

Firstly, we include an example where we develop the algorithm step by step for the convenience of the reader.

\begin{ex}
We consider the ring $S:=\mathbb{F}_2 [x,y]$ and set $u:=xy$. We compute $\FP_1 (S)$.
\begin{enumerate}[(a)]

\item Start with $I=S=I^{\#}$. As $I_1 (uI)=I$ add $S$ to the list $\FP_1 (S)$.

\item As $I$ is principal, go on with $I=\mathfrak{m}=I^{\#}$. Since $I_1 (uI)=I$ add $\mathfrak{m}$ to the list $\FP_1 (S)$. Moreover, we have to note that
\[
\left\{\mathfrak{m}^2\subseteq V\subseteq\mathfrak{m}\mid\quad\dim_{\mathbb{F}_2} \mathfrak{m}/V=1\right\}=\left\{\langle x,y^2\rangle, \langle y,x^2\rangle, \langle x^2 ,xy, x+y\rangle\right\}.
\]
We have to emphasize that in the calculation of this set is when we are using that we are working with characteristic two.

Thus, we need to compute the following sets of fixed ideals:
\[
\FP_1 (\langle x^2 ,xy, x+y\rangle),\FP_1 (\langle x,y^2\rangle)\text{ and }\FP_1 (\langle y,x^2\rangle).
\]
As $\langle x^2 ,xy, x+y\rangle^{\#}=\langle xy\rangle$ and $I_1 (u\langle xy\rangle)=\langle xy\rangle$ add $\langle xy\rangle$ to the list $\FP_1 (\langle x^2 ,xy, x+y\rangle)$. Moreover, as $\langle xy\rangle$ is principal go on with $\langle x^2 y,xy^2\rangle$. Nevertheless, since $\langle x^2 y,xy^2\rangle^{\#}=\langle 0\rangle$ we deduce that
\[
\FP_1 (\langle x^2 ,xy, x+y\rangle)=\{\langle xy\rangle ,\langle 0\rangle\}.
\]

On the other hand, since $\langle x,y^2\rangle^{\#}=\langle x\rangle$ and $I_1 (u\langle x\rangle)=\langle x\rangle$ add $\langle x\rangle$ to the list $\FP_1 (\langle x,y^2\rangle)$. In addition, as $\langle x\rangle$ is principal go on with $\langle x^2 ,xy\rangle$. However, since $\langle x^2, xy\rangle^{\#}=\langle xy\rangle$ we can use the foregoing calculations and therefore we conclude that $\FP_1 (\langle x,y^2\rangle)=\{\langle x\rangle ,\langle xy\rangle ,\langle 0\rangle\}$.

A similar computation shows that $\FP_1 (\langle y,x^2\rangle)=\{\langle y\rangle, \langle xy\rangle, \langle 0\rangle\}$.

\end{enumerate}
In this way, it follows that $\FP_1 (S)=\left\{\mathbb{F}_2 [x,y], \langle x,y\rangle, \langle x\rangle ,\langle y\rangle, \langle xy\rangle, \langle 0\rangle\right\}$.
\end{ex}

Secondly, we include a more involved example which was studied in greater detail in \cite[Section 9]{Katzman2008}.

\begin{ex}
Consider the matrix of variables
\[
A:=\begin{pmatrix} x_1& x_2& x_2& x_5\\ x_4& x_4& x_3& x_1\end{pmatrix}
\]
and set $S:=\mathbb{F}_2 [x_1,x_2,x_3,x_4,x_5]$. Furthermore, for any $1\leq i<j\leq 4$ we denote by $M_{ij}$ the minor of $A$ of size $2$ obtained from columns $i$ and $j$. In addition, set
\[
u:=x_1^3x_2x_3+x_1^3x_2x_4+x_1^2x_3x_4x_5+x_1x_2x_3x_4x_5+x_1x_2x_4^2x_5+x_2^2x_4^2x_5+x_3x_4^2x_5^2+x_4^3x_5^2.
\]
Our procedure produces the following $84$ proper $\phi$-fixed ideals of $S$, where $\phi:=u\Phi_1$.

\begin{enumerate}[(i)]

\item One prime ideal generated by five elements; namely, the ideal $\mathfrak{m}$ generated by all the variables of $S$.

\item Four prime ideals generated by four elements; namely,
\[
\langle x_1,x_2,x_3,x_4\rangle,\langle x_1,x_2,x_4,x_5\rangle,\langle x_1,x_3,x_4,x_5\rangle,\langle x_1,x_2,x_3+x_4,x_5\rangle.
\]

\item Five prime ideals generated by three elements; namely,
\[
\langle x_1,x_2,x_5\rangle, \langle x_1,x_3,x_4\rangle,\langle x_1,x_2,x_4\rangle,\langle x_1,x_4,x_5\rangle,\langle x_1+x_2,x_3+x_4,x_2^2+x_4x_5\rangle.
\]

\item Two prime ideals generated by two elements; namely, $\langle x_1,x_4\rangle$ and $\langle x_1+x_2,x_2^2+x_4x_5\rangle$. The reader should notice that
\[
\langle x_1+x_2,x_2^2+x_4x_5\rangle =\langle x_1+x_2,x_1^2+x_4x_5\rangle
\]
because of we are working on characteristic two.

\item One prime ideal generated by just one element; namely, the ideal $\langle u\rangle$.

\item Twenty-nine ideals which contains in their set of minimal generators some $M_{ij}$ for some $1\leq i<j\leq 4$.

\item The remainder fourty-two ideals define arrangements of linear varieties. Among these $42$ ideals, there is one distinguished element; namely, the ideal $\langle x_1, x_2, x_3 +x_4, x_4x_5\rangle$. In \cite[Section 9]{Katzman2008} it was shown that this ideal is the parameter test ideal of the quotient ring $S/I$, where $I$ is the ideal of $S$ generated by the $2\times 2$ minors of $A$.

\end{enumerate}
The reader should notice that, in this case, the set of $\phi$-fixed ideals equals the set of $\phi$-compatible ideals; indeed, this is due to the fact that, in this case, the map $\phi=u\Phi_1$ is a Frobenius splitting. In particular, we recover the thirteen non-zero $\phi$-compatible primes obtained by M.\,Katzman and K.\,Schwede in \cite[Example 7.2]{KatzmanSchwede2012}.
\end{ex}

Thirdly, we include an example where the characteristic of our ground field is greater than two.

\begin{ex}
Let $S:=\mathbb{F}_5 [x,y,z]$, and  $u=\left(x^4+y^4+z^4\right)^4$. The aim of this example is to compute, using our algorithm, all the $\phi$-fixed ideals of $S$, where $\phi :=u \Phi_1$. Our method produces the following sixty-five non-zero $\phi$-fixed ideals. 

\begin{enumerate}[(i)]

\item The ideal $\mathfrak{m}:=\langle x,y,z\rangle\subseteq S$, its square $\mathfrak{m}^2$ and the principal ideal $\langle u\rangle$.

\item Thirty-one ideals of the form $\mathfrak{m}^2+H$, where $H$ is an ideal of $S$ generated by a single linear form. 

\item Thirty-one ideals of the form $\mathfrak{m}^2+G$, where $G$ is an ideal of $S$ generated by two linear forms. 

\end{enumerate}
It is worth noting that this specific calculation is also interesting because it provides an example where our method provides more information than the procedures worked out in \cite{KatzmanSchwede2012}; indeed, if one uses \cite{FsplittingM2} here, then one only gets the ideals $\langle u\rangle$ and $\mathfrak{m}$. As we have explained in the Introduction, the reader should remember that, whereas our algorithm produces all the $\phi$-fixed ideals, the procedures described in \cite{KatzmanSchwede2012} describes algorithmically the radical $\phi$-compatible ideals.
\end{ex}

Finally, we conclude this paper with the following example, which was studied in greater detail in \cite[Section 2]{Katzman2010}.

\begin{ex}
We fix the $2\times 3$ matrix of indeterminates
\[
\begin{pmatrix} x_1& x_2& x_3\\ y_1& y_2& y_3\end{pmatrix}
\]
and we let $S$ to be the polynomial ring $\mathbb{F}_2 [x_1,x_2,x_3, y_1,y_2,y_3]$. Moreover, for any $1\leq i<j\leq 3$ $\Delta_{ij}$ will stand for the $2\times 2$ minor obtained from columns $i$ and $j$. In this way, taking into account this notation, we set
\[
u:=\Delta_{12}\Delta_{13}=(x_1 y_2-x_2 y_1)(x_1 y_3-x_3 y_1).
\]
Our procedure produces the following seven proper $\phi$-fixed ideals, where $\phi:=u\Phi_1$; namely,
\[
\langle x_1,y_1,\Delta_{23}\rangle , \langle x_1,y_1\rangle , \langle \Delta_{12},\Delta_{13},\Delta_{23}\rangle , \langle\Delta_{12},\Delta_{13}\rangle , \langle\Delta_{12}\rangle , \langle\Delta_{13}\rangle , \langle\Delta_{12}\Delta_{13}\rangle .
\]
In particular, we obtain the following five proper $\phi$-fixed prime ideals:
\[
\langle\Delta_{12}\rangle , \langle\Delta_{13}\rangle , \langle x_1,y_1\rangle , \langle x_1,y_1,\Delta_{23}\rangle , \langle \Delta_{12},\Delta_{13},\Delta_{23}\rangle .
\]
Such list of $\phi$-fixed prime ideals turns out to be the complete list of proper $\phi$-compatible prime ideals, as the reader can check using \cite{FsplittingM2}.
\end{ex}

\bibliographystyle{plain}
\bibliography{sepshh}

\end{document}